\newtheorem{theorem}{Theorem}[section]
\newtheorem{definition}[theorem]{Definition}
\newtheorem{example}[theorem]{Example}
\newtheorem{remark}[theorem]{Remark}
\numberwithin{equation}{section}
\begin{document}

%\hskip6.1cm\includegraphics{memo.eps}
%
%\vskip-2.5cm

%%%%%% TO BE ENTERED BY THE AUTHOR(S)
%%%
%%% ENTER TITLE
\title{Fixed-Point Theorems in 
b-Metric Spaces via a Novel Simulation Function}

%%% AUTHOR(S) FULL NAMES, AND EMAIL ADDRESSES
\author[affil1]{Anuradha Gupta}
\ead{dishna2@yahoo.in}
%% Put \corref{mycorrespondingauthor} next to the name of the appropriate corresponding 
%% author
%%%
\author[affil2]{Rahul Mansotra\corref{mycorrespondingauthor}}
\ead{mansotrarahul2@gmail.com}
%%% ENTER AUTHOR(S) AFFILIATION(S)
\address[affil1]{Department of Mathematics
Delhi College of Arts and Commerce
Netaji nagar, New Delhi, 110023, India}
\address[affil2]{Department of Mathematics
Faculty of Mathematical sciences
University of Delhi
New Delhi, 110007, India}
%%% AND CORRESPONDINGLY FOR OTHER AUTHORS, IF THERE ARE MORE AUTHORS
%%% ENTER ABBREVIATED AUTHOR(S) NAMES FOR PAGE HEADINGS
\newcommand{\AuthorNames}{A. Gupta, R. Mansotra}
%%% IF THERE ARE MORE THAN TWO AUTHORS WRITE
%%% \newcommand{\AuthorNames}{First Author et al.}
%%%

%%% ENTER MSC, KEYWORDS, RECEIVED, EDITOR, THANKS FOR FINANCIAL SUPPORT FOR RESEARCH
\newcommand{\FilMSC}{Primary 47H10; Secondary 54H25.}
\newcommand{\FilKeywords}{Fixed point, $\mathbb{A}_{\mathbb{R}}$-simulation function, $\mathfrak{J}_{_{\mathbb{A}_{\mathbb{R}}}}$-contraction, $b$-metric space.}
\newcommand{\FilCommunicated}{name of the Editor, mandatory}
\cortext[mycorrespondingauthor]{* Corresponding author: Rahul Mansotra}

%%% If you do not want to thank for the financial support of the research, remove
%%% the previous line (i.e., leave \FilSupport undefined)
%%%%%%%%%%%%%%%%%%%%%%%%%%%%%%%%%%%%%%%%%%%%%%%%%%

\begin{abstract}
This paper introduces a new type of simulation function within the framework of
$b$-metric spaces, leading to the derivation of fixed-point results in this general setting. We explore the theoretical implications of these results and demonstrate their utility through a concrete example. 
\end{abstract}

\maketitle

%%%%%% THIS PART MUST BE PLACED IMMEDIATELY AFTER THE \maketitle COMMAND
%%%%%% BACK TO ORIGINAL FOOTNOTES
\makeatletter
\renewcommand\@makefnmark%
{\mbox{\textsuperscript{\normalfont\@thefnmark)}}}
\makeatother
%%%%%%

\section{Introduction and  Preliminaries}

Fixed point theory has long been a cornerstone in both theoretical and applied mathematics, offering deep insights into the behavior of nonlinear systems and algorithms. One of its most powerful tools is the Banach Contraction Mapping Theorem \cite{bm3}, which provides a rigorous foundation for proving the existence and uniqueness of fixed points under specific conditions. The theorem also guarantees that certain methods will converge to the fixed point, making it an essential result in the study of mathematical structures and solutions. The exploration of fixed points extends across various mathematical disciplines, including optimization, differential equations, numerical analysis, control theory, and game theory.
\par In recent decades, there has been a growing interest in extending both classical and contemporary results from metric fixed point theory to a wider range of generalized metric spaces. This transition has not only broadened the scope of fixed point theory but also introduced new challenges and opportunities for research in more abstract settings. These advancements have led to a deeper understanding of how fixed point results can be adapted and applied beyond traditional metric spaces. A recent survey by Van An et al. \cite{bm23} offers a comprehensive exploration of these developments, highlighting key results and their potential implications in various mathematical contexts. 
\par In most instances, this approach turned out to be remarkably straightforward, as the fixed point theorems developed in more general metric spaces could be easily derived from their counterparts in classical metric spaces through a systematic metrization process. This process is exemplified in several influential studies, including \cite{bm7}, \cite{bm8},  and \cite{bm10}.
\par However, there are certain generalized metric spaces, such as quasi metric spaces (often referred to as b-metric spaces within fixed point theory), where the transposition process typically leads to meaningful generalizations of fixed point theorems from traditional metric spaces. Bakhtin \cite{bm4} and Czerwik \cite{bm6} played a pivotal role in this development by extending the classical concept of metric space, introducing the more general notion of b-metric spaces, and thereby expanding the framework of fixed point theory and its related fields. \par In 2014, Jleli and Samet \cite{bm14} introduced the concept of $\vartheta$-contractions, providing an important generalization of the Banach contraction principle in the framework of Branciari distance spaces \cite{bm4}. Later, Ahmad et al. \cite{bm13} refined the conditions on the auxiliary function $\vartheta$ (say), leading to a comparable result in standard metric spaces. Alternatively, Khojasteh et al. \cite{bm17} established the concept of simulation functions with a view to consider a new class of contractions, called $\mathcal{Z}$-contractions. Such family generalized,
extended and improved several results that had been obtained in previous years. The simplicity
and usefulness of these contractions have inspirited many researchers to diversify it further
(see [ \cite{bm12},  \cite{bm15},  \cite{bm16}, \cite{bm19},  \cite{bm21}, and \cite{bm22} ]).
\par Building on the concept of simulation functions introduced by Khojasteh et al. \cite{bm17}, Cho et al. \cite{bm5} made a remarkable advancement in 2018 by introducing the $\mathcal{L}$-simulation function as a novel structure within the field. This groundbreaking contribution not only set a new standard but also sparked a wave of further research and development, highlighting the transformative influence of innovative methodologies on the evolution of simulation practices.  \\ Throughout this article, $\mathcal{X}$ denotes a nonempty set, $\mathbb{R}^{+}$ represents the set of positive real numbers, $\mathbb{N}$ stands for the set of positive integers, and $\mathbb{N}_{0}$ refers to the set of nonnegative integers.\\
This section begins with the following definition: \begin{definition}\cite{bm6} A map $\mathfrak{b} :\mathcal{X} \times \mathcal{X} \rightarrow [ 0, \infty )$ is said to be  $b$-metric on $\mathcal{X}$ if there exist $s\geq1$ such that  for all $x, y , z \in \mathcal{X},~\mathfrak{b} $ satisfies the following:
\begin{enumerate}[label=(\roman*)]
\item$\mathfrak{b}(x,y)=0$ if and only if $  x = y $;
    \item $\mathfrak{b}(x,y) = \mathfrak{b}(y,x)$;
    \item $\mathfrak{b}(x,z) \leq s[\mathfrak{b}(x,y) + \mathfrak{b}(y,z)]$.
\end{enumerate}
Then, $(\mathcal{X},\mathfrak{b},s)$ is called a $b$-metric space with coefficient $s$.
\end{definition}
\begin{definition}\cite{bm6}  Let $( \mathcal{X} , \mathfrak{b} , s)$ be a  $b$-metric space. Then:
\begin{enumerate}[label=(\roman*)]
    \item  A sequence $ ( a_{k})$ in $\mathcal{X}$ is said to be convergent if there is  $a \in \mathcal{X}$ such that  $ \lim \limits_{k\rightarrow \infty} \mathfrak{b}(a_{k},a )=0;$
    \item A sequence  $ (a_{k})$  in $\mathcal{X}$ is said to be cauchy  in $ \mathcal{X}$ if  $ \lim \limits_{k , m\rightarrow \infty} \mathfrak{b}(a_{k} , a_{m} )$  exists and is finite;
    \item $\mathcal{X}$ is said to be complete if for every Cauchy sequence $(a_{k})$ in $\mathcal{X}$ there is $a \in \mathcal{X}$ such that $\lim\limits_{k,m \rightarrow \infty } \mathfrak{b}(a_{k}  , a_{m}) =  \lim \limits_{k \rightarrow \infty }\mathfrak{b}( a_{k}, a ) = 0;$
    \item A function $\mathcal{S} : \mathcal{X} \to \mathcal{X}$ is said to be $b$-continuous if for $(a_{n}) \subseteq \mathcal{X}$, $a_{n} \to a$ in $(\mathcal{X},\mathfrak{b})$ we have $\mathcal{S}a_{n} \to \mathcal{S}a$ in $(\mathcal{X},\mathfrak{b})$.
     \end{enumerate}
     \end{definition}
    Following \cite{bm14},  $\Theta$ denotes the set of all mappings $\vartheta:  (0, \infty) \to (1, \infty)$ satisfies the following properties:\\
$(a)$ $\vartheta$ is increasing;\\
    $(b)$ for each sequence $\{ a_{n}\} \subseteq (0, \infty), \lim \limits_{n \to \infty}\vartheta(a_{n})=1 \iff \lim \limits_{n \to \infty}a_{n}=0;$\\
     $(c)$ there exist $t\in (0,1)$ and $d \in(0,\infty]$ such that $\lim \limits_{x \to 0^{+}} \dfrac{\vartheta(x)-1}{x^{t}}=d.$\\
     Further, Ahmad et al. \cite{bm13} replaced the condition $(c)$ with the following:\\
  $(d)$ $\vartheta$ is continuous.\\
The symbol  $\Theta^{*}$ denotes the collection of all   mappings satisfying conditions   $(a),(b) \text{ and } (d)$.
 \par Accordingly, authors in \cite{bm13} established the following Fixed Point Theorem:
 \begin{theorem}
     Every $\vartheta$-contraction on a complete metric space has a unique fixed point.
 \end{theorem}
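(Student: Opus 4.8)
The plan is to run the classical Picard iteration and replace the growth estimate $(c)$ of Jleli–Samet by the continuity hypothesis $(d)$ that defines $\Theta^{*}$. Recall that $T$ being a $\vartheta$-contraction on a complete metric space $(X,d)$ means there exist $\vartheta\in\Theta^{*}$ and $k\in(0,1)$ such that $\vartheta\big(d(Tx,Ty)\big)\le\big[\vartheta\big(d(x,y)\big)\big]^{k}$ whenever $d(Tx,Ty)>0$. I would fix $x_{0}\in X$, set $x_{n+1}=Tx_{n}$, and dispose of the trivial case $x_{n_{0}}=x_{n_{0}+1}$; otherwise $d(x_{n},x_{n+1})>0$ for every $n$. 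Iterating the contractive inequality along consecutive iterates gives $\vartheta\big(d(x_{n},x_{n+1})\big)\le\big[\vartheta\big(d(x_{0},x_{1})\big)\big]^{k^{n}}$, and since $k\in(0,1)$ the right-hand side tends to $1$; hence $\vartheta\big(d(x_{n},x_{n+1})\big)\to1$, and property $(b)$ forces $d(x_{n},x_{n+1})\to0$.

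The crux — and the step I expect to be the main obstacle — is showing $(x_{n})$ is Cauchy without the power estimate $(c)$, so one must argue by contradiction. Suppose $(x_{n})$ is not Cauchy; then there are $\varepsilon>0$ and indices $m_{j}>n_{j}\ge j$ with $d(x_{m_{j}},x_{n_{j}})\ge\varepsilon$, where $m_{j}$ is chosen minimal with this property so that $d(x_{m_{j}-1},x_{n_{j}})<\varepsilon$. Combining the triangle inequality with $d(x_{n},x_{n+1})\to0$, I would deduce $d(x_{m_{j}},x_{n_{j}})\to\varepsilon$ and, similarly, $d(x_{m_{j}-1},x_{n_{j}-1})\to\varepsilon$. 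Because $d(x_{m_{j}},x_{n_{j}})>0$, applying the contraction to the pair $(x_{m_{j}-1},x_{n_{j}-1})$ gives $\vartheta\big(d(x_{m_{j}},x_{n_{j}})\big)\le\big[\vartheta\big(d(x_{m_{j}-1},x_{n_{j}-1})\big)\big]^{k}$; letting $j\to\infty$ and invoking the continuity $(d)$ of $\vartheta$ yields $\vartheta(\varepsilon)\le[\vartheta(\varepsilon)]^{k}$, which is impossible since $\vartheta(\varepsilon)>1$ and $k<1$. Therefore $(x_{n})$ is Cauchy.

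By completeness $x_{n}\to z$ for some $z\in X$. To see that $z$ is fixed, I would suppose $d(z,Tz)>0$; since $x_{n+1}\to z\ne Tz$, we have $x_{n+1}\ne Tz$ for large $n$, hence $d(Tx_{n},Tz)=d(x_{n+1},Tz)>0$ and $\vartheta\big(d(x_{n+1},Tz)\big)\le\big[\vartheta\big(d(x_{n},z)\big)\big]^{k}$. As $d(x_{n},z)\to0$, property $(b)$ gives $\vartheta\big(d(x_{n},z)\big)\to1$, so $\vartheta\big(d(x_{n+1},Tz)\big)\to1$, and $(b)$ again yields $d(x_{n+1},Tz)\to0$, i.e. $x_{n+1}\to Tz$; uniqueness of limits then forces $Tz=z$, a contradiction. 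Finally, for uniqueness, if $z$ and $w$ were distinct fixed points then $d(Tz,Tw)=d(z,w)>0$, and the contractive inequality gives $\vartheta\big(d(z,w)\big)\le[\vartheta(d(z,w))]^{k}<\vartheta\big(d(z,w)\big)$, which is absurd; hence the fixed point is unique.
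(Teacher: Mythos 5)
Your proof is correct. Note that the paper does not actually prove this statement --- it is quoted from Ahmad et al.\ \cite{bm13} as a known result --- so there is no in-paper argument to compare against; your proposal reproduces the standard proof from that reference, correctly identifying that the loss of the growth condition $(c)$ must be compensated by the continuity hypothesis $(d)$ in the Cauchy step (via the minimal-index contradiction argument and passing to the limit in $\vartheta(d(x_{m_j},x_{n_j}))\le[\vartheta(d(x_{m_j-1},x_{n_j-1}))]^{k}$), and all the remaining steps (Picard iteration, $d(x_n,x_{n+1})\to 0$ via property $(b)$, the fixed-point and uniqueness arguments) are sound.
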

 \par  Recently, Cho in \cite{bm5} introduced the $\mathcal{L}$-simulation function as follows:
  \begin{definition}A map $\mathcal{L}$ from $[1,\infty)\times[1,\infty)$ to $\mathbb{R}$ defines a $\mathcal{L}$-simulation function if   for all $a,b \in [1,\infty)$,  $\mathcal{L}$ satisfies the following properties:
	\begin{enumerate}[label=(\roman*)]
    \item $\mathcal{L}(1,1)=1;$
		\item $\mathcal{L}(a,b)<\dfrac{b}{a}$ for all $a,b>1$;
		\item if $(a_{n})$ and $(b_{n})$ are sequences in $(1,\infty)$  such that $1<\lim\limits_{n \to \infty} a_{n}=\lim \limits_{n \to \infty} b_{n}$,
          then $\limsup \limits_{n \to \infty} \mathcal{L}(a_n,b_n)<1$. 
	\end{enumerate}\end{definition}
   By $\mathscr{L}$  we denote the family of all $\mathcal{L}$-simulation functions.
    \par Hasanuzzaman et al. \cite{bm12} introduced the $\mathcal{L}$-contraction in  metric space as follows:	 
    \begin{definition}
        Let $(\mathcal{X},d )$ be a metric space. Then $\mathcal{T}: \mathcal{X}\to \mathcal{X}$ is called  $\mathcal{L}$-contraction with respect to  $\mathcal{L}$ if there exist $\mathcal{L} \in \mathscr{L} $ and $\vartheta \in \Theta^{*}$ such that $$ \mathcal{L}(\vartheta(d(\mathcal{T}x,\mathcal{T}y)), \vartheta(d(x,y)))\geq 1$$ for all $a,b \in \mathcal{X}$  with $d(\mathcal{T}x,\mathcal{T}y)>0$.
    \end{definition}
  Inspired by Cho’s work \cite{bm5} on 
$\mathcal{L}$-contractions in metric spaces and the contributions of  Gupta  and  Rohilla \cite{bm9} on simulation functions in $b$-metric spaces, this article introduces the concept of 
$\mathbb{A}_{\mathbb{R}}$-simulation functions. To underscore the importance and applicability of this concept, fixed point theorems are developed and substantiated with a comprehensive example that demonstrates its practical relevance.
\section{Fixed Point Theorems Using  $\mathfrak{J}_{_{\mathbb{A}_{\mathbb{R}}}}$-Contractions}
 This section begins by introducing the essential definitions and notations that underpin the theorems and proofs presented. \\
Let $\mathfrak{F_{c}}$ denote the class of all the  operators $\mathcal{F}_{c}\text{ from } [1,\infty)\times[1,\infty) \text{ to }\mathbb{R}$ such that for all $a,b\in [1,\infty)$,  satisfying the following properties: 
\begin{enumerate}[label=(\roman*)]
    \item $\mathcal{F}_{c}$ is continuous;
    \item $\mathcal{F}_{c}(x,y) \leq x$;
    \item $\mathcal{F}_{c}(x,y)=x$ implies that either $x=1$ or $y=1$;
    \item there exist $c \geq 1$ such that $\mathcal{F}_{c}(x,y)>c$ implies that $x>y$ and $\mathcal{F}_{c}(x,x)\leq c.$ \end{enumerate}
\begin{example}
		$\mathcal{F}_{c}(x,y)=\frac{x}{y}$. Here $c=1$.	
	\end{example}
    We define $\mathbb{A}_{\mathbb{R}}$-simulation function in the following:
	\begin{definition}A map $\mathfrak{J}$ from $[1,\infty)\times[1,\infty)$ to $\mathbb{R}$ defines a $\mathbb{A}_{\mathbb{R}}$-simulation function if there exist $s \geq 1$, $\mathcal{F}_{c}   \in \mathfrak{F_{c}}$ and $\vartheta \in \Theta^{*}$  such that  for all $x,y \in (1,\infty)$,  $\mathfrak{J}$ satisfying the following properties:
	\begin{enumerate}[label=(\roman*)]
		\item $\mathfrak{J}(x,y)<\mathcal{F}_{c}(y,x)$;
		\item if $(a_{n})$ and $(b_{n})$ are sequences in $(0,\infty)$  such that $$0<\liminf\limits_{n \to \infty} a_{n}\leq s(\limsup \limits_{n \to \infty} b_{n})\leq s^{2}(\liminf\limits_{n \to \infty} a_{n})<\infty$$
        and
     $$0<\liminf\limits_{n \to \infty} b_{n}\leq s(\limsup \limits_{n \to \infty} a_{n})\leq s^{2}(\liminf\limits_{n \to \infty} b_{n})<\infty,$$   then $\limsup \limits_{n \to \infty} \mathfrak{J}(\vartheta(a_n),\vartheta(b_n))<c$. 
	\end{enumerate}
Let $\bf{\mathscr{J}}$ denote the set of all $\mathbb{A}_{\mathbb{R}}$-simulation functions.
 \end{definition}
	
	\begin{example} Define $\mathfrak{J}:[1,\infty)\times[1,\infty)\rightarrow\mathbb{R}$,   $\mathcal{F}_{c}:[1,\infty) \times [1,\infty) \rightarrow \mathbb{R}$ and $\vartheta: (0, \infty) \to (1, \infty)$ by
		\begin{equation*}
			\mathfrak{J}(x,y)=\frac{y}{4x} \text{ , }  \mathcal{F}_{c}(y,x) = \frac{y}{x} \text{ and } \vartheta(x)=x+1.\end{equation*} 
Note that  $\mathfrak{J}(x,y)< \mathcal{F}_{c}(y,x)$, for all $x,y \in (1,\infty)$ and  $c =1$. Further, If $(a_{n})$ and $(b_{n})$ are sequences in  $(0,\infty)$  such that  $$0<\liminf\limits_{n \to \infty} a_{n}\leq 4(\limsup \limits_{n \to \infty} b_{n})\leq 16(\liminf\limits_{n \to \infty} a_{n})<\infty$$
        and
     $$0<\liminf\limits_{n \to \infty} b_{n}\leq 4(\limsup \limits_{n \to \infty} a_{n})\leq16(\liminf\limits_{n \to \infty} b_{n})<\infty,$$ then $\limsup \limits_{n \to \infty} \mathfrak{J}(\vartheta(a_{n}),\vartheta(b_{n}))=\limsup\limits_{n \to \infty}(\dfrac{b_{n}+1}{4(a_{n}+1)})=\dfrac{\limsup\limits_{n \to \infty}(b_{n}+1)}{\liminf \limits_{n \to \infty}4(a_{n}+1)}$.
     \\ Since $\limsup\limits_{n \to \infty}b_{n} \leq \liminf \limits_{n \to \infty}4 a_{n}, ~\limsup \limits_{n \to \infty} \mathfrak{J}(\vartheta(a_{n}),\vartheta(b_{n}))< 1$. Thus $\mathfrak{J}$ is a $\mathbb{A}_{\mathbb{R}}$-simulation function.
		 \end{example}
	\begin{definition} An operator $\mathcal{S}$ from $\mathcal{X}$ to $\mathcal{X}$ defines a $\mathfrak{J}_{_{\mathbb{A}_{\mathbb{R}}}}$-contraction if there exist $\mathfrak{J} \in \mathscr{J}, \vartheta \in \Theta^{*}$  such that for all $x,y \in \mathcal{X}$ with $\mathfrak{b}(\mathcal{S}x,\mathcal{S}y)>0$ implies  \begin{equation}
  \mathfrak{J}(\vartheta
(\mathfrak{b}(\mathcal{S}x,\mathcal{S}y)),\vartheta(\mathfrak{b}(x,y)))\geq c.\end{equation}
\end{definition}
\begin{theorem}
		Let $(\mathcal{X},\mathfrak{b},s)$ be a complete $b$-metric space with coefficient $s\geq1$ and $\mathcal{S}:\mathcal{X}\rightarrow \mathcal{X}$ be a given mapping. Suppose that $\mathcal{S}$ is a $\mathfrak{J}_{_{\mathbb{A}_{\mathbb{R}}}}$-contraction. Then $\mathcal{S}$ has a unique fixed point.
	\end{theorem}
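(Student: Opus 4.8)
\emph{Proof plan.} The plan is to study the Picard sequence $x_{0}\in\mathcal{X}$, $x_{n+1}=\mathcal{S}x_{n}$, using in tandem the two conditions defining an $\mathbb{A}_{\mathbb{R}}$-simulation function and conditions (iii)--(iv) of the class $\mathfrak{F_{c}}$. If $x_{n}=x_{n+1}$ for some $n$ we are done, so assume $\beta_{n}:=\mathfrak{b}(x_{n},x_{n+1})>0$ for all $n$. First I would record a monotonicity lemma: whenever $\mathfrak{b}(\mathcal{S}x,\mathcal{S}y)>0$ (which forces $\mathfrak{b}(x,y)>0$), substituting the $\mathfrak{J}_{_{\mathbb{A}_{\mathbb{R}}}}$-contraction inequality into property~(i) of $\mathfrak{J}$ gives
\[c\le\mathfrak{J}\big(\vartheta(\mathfrak{b}(\mathcal{S}x,\mathcal{S}y)),\vartheta(\mathfrak{b}(x,y))\big)<\mathcal{F}_{c}\big(\vartheta(\mathfrak{b}(x,y)),\vartheta(\mathfrak{b}(\mathcal{S}x,\mathcal{S}y))\big),\]
so the right-hand side exceeds $c$, whence condition~(iv) of $\mathfrak{F_{c}}$ forces $\vartheta(\mathfrak{b}(x,y))>\vartheta(\mathfrak{b}(\mathcal{S}x,\mathcal{S}y))$; since $\vartheta$ is increasing, $\mathfrak{b}(\mathcal{S}x,\mathcal{S}y)<\mathfrak{b}(x,y)$. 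Taking $x=x_{n-1},y=x_{n}$ shows that $(\beta_{n})$ strictly decreases to some $r\ge0$. If $r>0$, property~(ii) of $\mathfrak{J}$ applied to $a_{n}=\beta_{n+1}$ and $b_{n}=\beta_{n}$ --- for which every $\liminf$ and $\limsup$ equals $r$, so both chain hypotheses reduce to $0<r\le sr\le s^{2}r<\infty$, true since $s\ge1$ --- gives $\limsup_{n}\mathfrak{J}(\vartheta(a_{n}),\vartheta(b_{n}))<c$, contradicting $\mathfrak{J}(\vartheta(a_{n}),\vartheta(b_{n}))\ge c$. Hence $\beta_{n}\to0$.

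The crux is proving $(x_{n})$ is Cauchy. Arguing by contradiction, if not there are $\varepsilon>0$ and indices $n_{k}<m_{k}$ with $m_{k}$ minimal subject to $\mathfrak{b}(x_{n_{k}},x_{m_{k}})\ge\varepsilon$, so $\mathfrak{b}(x_{n_{k}},x_{m_{k}-1})<\varepsilon$; for large $k$ one has $m_{k}\ge n_{k}+2$ and $\mathfrak{b}(x_{n_{k}},x_{m_{k}-1})>0$, $\mathfrak{b}(x_{n_{k}+1},x_{m_{k}})>0$, since each failure would make $\mathfrak{b}(x_{n_{k}},x_{m_{k}})$ equal to some $\beta_{j}\to0$. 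The decisive move is to apply the contraction to the \emph{shifted} pair $x=x_{n_{k}}$, $y=x_{m_{k}-1}$, whose $\mathcal{S}$-images are $x_{n_{k}+1}$ and $x_{m_{k}}$; I expect locating this shift to be the main obstacle, because the ``obvious'' pair $(x_{n_{k}},x_{m_{k}})$ yields $\liminf/\limsup$ ratios that overshoot the $s^{2}$ tolerance built into property~(ii). Write $a_{k}:=\mathfrak{b}(x_{n_{k}+1},x_{m_{k}})$ and $b_{k}:=\mathfrak{b}(x_{n_{k}},x_{m_{k}-1})$. Using the minimality of $m_{k}$, the monotonicity lemma (so $a_{k}<b_{k}<\varepsilon$), the triangle estimates $\varepsilon\le\mathfrak{b}(x_{n_{k}},x_{m_{k}})\le s(b_{k}+\beta_{m_{k}-1})$ and $\varepsilon\le\mathfrak{b}(x_{n_{k}},x_{m_{k}})\le s(\beta_{n_{k}}+a_{k})$, and $\beta_{n}\to0$, one traps both sequences in a band:
\[\frac{\varepsilon}{s}\le\liminf_{k\to\infty}a_{k}\le\limsup_{k\to\infty}a_{k}\le\varepsilon,\qquad\frac{\varepsilon}{s}\le\liminf_{k\to\infty}b_{k}\le\limsup_{k\to\infty}b_{k}\le\varepsilon.\]
From these bounds both chain conditions of property~(ii) hold outright (each amounts to comparing numbers in $[\varepsilon/s,\varepsilon]$ with the same numbers scaled by $s$ or $s^{2}$), so $\limsup_{k}\mathfrak{J}(\vartheta(a_{k}),\vartheta(b_{k}))<c$; yet the contraction applied to $(x_{n_{k}},x_{m_{k}-1})$ gives $\mathfrak{J}(\vartheta(a_{k}),\vartheta(b_{k}))\ge c$ for all large $k$ --- a contradiction. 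So $(x_{n})$ is Cauchy, hence Cauchy in the sense used in the paper.

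By completeness there is $z\in\mathcal{X}$ with $\mathfrak{b}(x_{n},z)\to0$. To get $\mathcal{S}z=z$ without assuming $\mathcal{S}$ continuous: if $\mathfrak{b}(\mathcal{S}z,\mathcal{S}x_{n})>0$ for infinitely many $n$, the monotonicity lemma gives $\mathfrak{b}(\mathcal{S}z,x_{n+1})<\mathfrak{b}(z,x_{n})\to0$ along those $n$, so $\mathfrak{b}(\mathcal{S}z,z)\le s\big(\mathfrak{b}(\mathcal{S}z,x_{n+1})+\mathfrak{b}(x_{n+1},z)\big)\to0$; otherwise $\mathcal{S}x_{n}=\mathcal{S}z$ eventually, so $x_{n}\to\mathcal{S}z$ and uniqueness of limits in a $b$-metric space forces $\mathcal{S}z=z$. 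Finally, for uniqueness, if $z,w$ are fixed points with $\mathfrak{b}(z,w)>0$, set $t:=\vartheta(\mathfrak{b}(z,w))\in(1,\infty)$; the contraction gives $\mathfrak{J}(t,t)\ge c$, whereas property~(i) of $\mathfrak{J}$ together with $\mathcal{F}_{c}(t,t)\le c$ (from condition~(iv) of $\mathfrak{F_{c}}$) gives $\mathfrak{J}(t,t)<c$ --- impossible. Therefore $\mathcal{S}$ has a unique fixed point.
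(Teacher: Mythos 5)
Your proof is correct and follows essentially the same route as the paper: strict decay of the Picard increments via property (i) of $\mathfrak{J}$ and property (iv) of $\mathcal{F}_{c}$, a contradiction via property (ii) to rule out both a positive limit of $\mathfrak{b}(x_n,x_{n+1})$ and the non-Cauchy scenario, then completeness, the fixed point, and uniqueness. The only differences are minor: in the Cauchy step you substitute the shifted pair $(x_{n_k},x_{m_k-1})$, trapping both distances in $[\varepsilon/s,\varepsilon]$, where the paper substitutes $(a_{m_i-1},a_{n_i-1})$ and traps them in $[\varepsilon,s\varepsilon]$ --- both fit the $s^2$ tolerance of property (ii) --- and you additionally close the case the paper leaves implicit, namely when $\mathcal{S}x_n=\mathcal{S}z$ for all large $n$.
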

	\begin{proof}
		Let $a_{0}\in \mathcal{X}$ and define $\mathcal{S}^na_0=a_n$, for all $n \in \mathbb{N}_{0}$. If $\mathfrak{b}(a_{n},a_{n+1})=0$ then $a_n=a_{n+1}=\mathcal{S}a_n$ becomes fixed point of $\mathcal{S}$. From this point onward, we can consider that $\mathfrak{b}(a_n, a_{n+1})\neq 0,$ for all $n\geq0$. Put $x=a_n \text{ and }y=a_{n+1}$ in inequality $(2.1)$ then
		\begin{align}
			c&\leq\mathfrak{J}(\vartheta(\mathfrak{b}(\mathcal{S}a_{n},\mathcal{S}a_{n+1})),\vartheta(\mathfrak{b}(a_n,a_{n+1})))\notag\\&=\mathfrak{J}(\vartheta(\mathfrak{b}(a_{n+1},a_{n+2})),\vartheta(\mathfrak{b}(a_n,a_{n+1})))\\&<\mathcal{F}_{c}(\vartheta(\mathfrak{b}(a_n,a_{n+1})),\vartheta(\mathfrak{b}(a_{n+1},a_{n+2}))).\notag
		\end{align}
		Thus, by the property of $\mathcal{F}_{c}$, we get
			$\vartheta (\mathfrak{b}(a_{n+1},a_{n+2}))<\vartheta(\mathfrak{b}(a_{n},a_{n+1}))$. Let us  suppose that $ \mathfrak{b}(a_n,a_{n+1})
  < \mathfrak{b}(a_{n+1},a_{n+2})$.  As $\vartheta$  is increasing, $ \vartheta(\mathfrak{b}(a_{n},a_{n+1})) \leq \vartheta (\mathfrak{b}(a_{n+1},a_{n+2}))$,  which is a contradiction. Thus, $\mathfrak{b}(a_{n+1},a_{n+2})\leq \mathfrak{b}(a_n,a_{n+1})$, for all $n\geq0.$ So, $(\mathfrak{b}(a_{n},a_{n+1}))$ is  a decreasing sequence of positive real numbers; hence $ \lim\limits_{n \to \infty}\mathfrak{b}(a_n,a_{n+1})=a\geq0$. We will show that $a=0. $
  Suppose $a>0$ then $0<a\leq sa \leq s^{2}a < \infty$.  Let $r_{n}=\mathfrak{b}(a_n,a_{n+1})$, then 
			$$0<\liminf\limits_{n \to \infty} r_{n+1}\leq s(\limsup \limits_{n \to \infty} r_{n})\leq s^{2}(\liminf\limits_{n \to \infty} r_{n})<\infty$$
        and
     $$0<\liminf\limits_{n \to \infty} r_{n}\leq s(\limsup \limits_{n \to \infty} r_{n+1})\leq s^{2}(\liminf\limits_{n \to \infty} r_{n})<\infty,$$
			hence by the property of $\mathfrak{J}$, we get 
			$\limsup \limits_{n \to \infty}\mathfrak{J}(\vartheta(r_{n+1}),\vartheta(r_{n}))<c.$ \\ Also, by inequality $(2.2)$,  $c \leq\limsup \limits_{n \to \infty}\mathfrak{J}(\vartheta(r_{n+1}),\vartheta(r_{n}))$, which leads to  a contradiction. Hence $\lim \limits_{n \to \infty} r_n=0.$
			Now, we aim to prove that $(a_n)$ is a Cauchy sequence. Let us suppose that $(a_{n})$ is not a Cauchy sequence in $(\mathcal{X},\mathfrak{b})$. Then, there exist $\varepsilon>0$ and subsequences $(a_{n_{i}})$ and $(a_{m_{i}})$ of  sequence $(a_n)$ such that $n_i$ is the smallest integer for which $n_i>m_i>i$ with 
            \begin{equation} \mathfrak{b}(a_{m_{i}},a_{n_{i}})\geq \varepsilon \mbox{ and }\mathfrak{b}(a_{m_i},a_{n_{i}-1})<\varepsilon. 
            \end{equation}
	Now,  $\varepsilon\leq \mathfrak{b}(a_{m_i},a_{n_i})\leq s(\mathfrak{b}(a_{m_i},a_{n_{i}-1})+\mathfrak{b}(a_{n_{i}-1},a_{n_i}))$, which implies that \begin{equation}\varepsilon\leq\liminf \limits_{i \to \infty}\mathfrak{b}(a_{m_i},a_{n_i})\leq s\varepsilon \text{ and }\varepsilon\leq\limsup \limits_{i \to \infty}\mathfrak{b}(a_{m_i},a_{n_i})\leq s\varepsilon.\end{equation}
			Note that $a_{m_i}\neq a_{n_i}$ as $\mathfrak{b}(a_{m_{i}},a_{n_{i}})\geq \varepsilon$. Substitute $x=a_{m_i-1}\text{ and } y=a_{n_i-1}$ in inequality $(2.1)$, we get
			\begin{align}c&\leq\mathfrak{J}(\vartheta(\mathfrak{b}(\mathcal{S}a_{n},\mathcal{S}a_{n+1})),\vartheta(\mathfrak{b}(a_n,a_{n+1})))\notag\\
				&=\mathfrak{J}(\vartheta(\mathfrak{b}(a_{m_i},a_{n_i})),\vartheta(\mathfrak{b}(a_{m_i-1},a_{n_i-1})))\\&<\mathcal{F}_{c}(\vartheta(\mathfrak{b}(a_{m_i-1},a_{n_{i}-1})),\vartheta(\mathfrak{b}(a_{m_i},a_{n_i})))\notag
			\end{align}
			which implies
			$\vartheta(\mathfrak{b}(a_{m_i},a_{n_i})) <\vartheta(\mathfrak{b}(a_{m_i-1},a_{n_i-1})))$. Let it be the case that $ \mathfrak{b}(a_{m_i-1},a_{n_i-1})< \mathfrak{b}(a_{m_i},a_{n_i}).$
 As $\vartheta$ is increasing, $ \vartheta(\mathfrak{b}(a_{m_i-1},a_{n_i-1}))\leq \vartheta(  \mathfrak{b}(a_{m_i},a_{n_i}))$ which leads to a contradiction. Also, $\varepsilon\leq \mathfrak{b}(a_{m_i},a_{n_i})<\mathfrak{b}(a_{m_i-1},a_{n_i-1})\leq$\\$ s(\mathfrak{b}(a_{m_{i}-1},a_{m_{i}})+\mathfrak{b}(a_{m_{i}},a_{n_{i}-1}))$. Thus, using inequality $(2.3)$ and taking limit superior and limit inferior as $i$ goes to infinity, we get
            \begin{equation}
                \varepsilon \leq \liminf \limits_{i \to \infty} \mathfrak{b}(a_{m_i-1},a_{n_i-1}) \leq s\varepsilon\text{ and } \varepsilon \leq \limsup \limits_{i \to \infty}\mathfrak{b}(a_{m_i-1},a_{n_i-1})\leq s\varepsilon.
            \end{equation}
            Using inequalities $(2.3)$, $(2.4)$ and $(2.6)$, we have
            \begin{align*}
                &0<\liminf \limits_{i \to \infty}\mathfrak{b}(a_{m_i},a_{n_i})\leq s\varepsilon \leq s(\limsup \limits_{i \to \infty} \mathfrak{b}(a_{m_i-1},a_{n_i-1}))\leq s^{2} \varepsilon \leq s^{2} (\liminf \limits_{i \to \infty}\mathfrak{b}(a_{m_i},a_{n_i})) < \infty \\ \text{and}\\
                &0<\liminf \limits_{i \to \infty} \mathfrak{b}(a_{m_i-1},a_{n_i-1})\leq s \varepsilon \leq s(\limsup \limits_{i \to \infty}\mathfrak{b}(a_{m_i},a_{n_i}))\leq s^{2}\varepsilon \leq s^{2}( \liminf \limits_{i \to \infty} \mathfrak{b}(a_{m_i-1},a_{n_i-1})) < \infty.
                \end{align*}
   Therefore, in light of the property of $\mathfrak{J}$, we have
			$$\limsup_{i \to \infty}\mathfrak{J}(\vartheta(\mathfrak{b}(a_{m_i},a_{n_i})),\vartheta(\mathfrak{b}(a_{m_i-1},a_{n_i-1})))<c.$$ Also, by inequality $(2.5)$, we get  $c\leq\limsup \limits_{i \to \infty}\mathfrak{J}(\vartheta(\mathfrak{b}(a_{m_i},a_{n_i})),\vartheta(\mathfrak{b}(a_{m_i-1},a_{n_i-1}))),$ which leads to a contradiction.
Hence, $\lim\limits_{n\to\infty}\mathfrak{b}(a_n,a_m)=0$.  \\ Since $(\mathcal{X},\mathfrak{b},s)$ is a complete $b$-metric space, \\
$$\lim\limits_{n,m \to \infty}\mathfrak{b}(a_n,a_m)=\lim\limits_{n \to \infty}\mathfrak{b}(a_n,z)=0,\text{ for some }z\in \mathcal{X}.$$\\ We will show  that $z$ is the unique fixed point of $\mathcal{S}$. Suppose $\mathfrak{b}(a_{n-1},z)\neq0$ and $\mathfrak{b}(a_n,\mathcal{S}z)\neq0$  for infinitely many $n$. Substitute $x=a_{n-1}$ and $y=z$ in inequality $(2.1)$, we get
\begin{align*}
	c&\leq\mathfrak{J}(\vartheta(\mathfrak{b}(\mathcal{S}a_{n-1},\mathcal{S}z)),\vartheta(\mathfrak{b}(a_{n-1},z)))\\&=\mathfrak{J}(\vartheta(\mathfrak{b}(a_{n},\mathcal{S}z)),\vartheta(\mathfrak{b}(a_{n-1},z)))\\&<\mathcal{F}_{c}(\vartheta(\mathfrak{b}(a_{n-1},z)),\vartheta(\mathfrak{b}(a_n,\mathcal{S}z))),
\end{align*}
hence, by the property of $\mathcal{F}_{c}$, we get
$\vartheta(\mathfrak{b}(a_n,\mathcal{S}z))<\vartheta(\mathfrak{b}(a_{n-1},z))$. Assume $ \mathfrak{b}(a_{n-1},z)<\mathfrak{b}(a_n,\mathcal{S}z)$. As $\vartheta$  is increasing, $  \vartheta(\mathfrak{b}(a_n,\mathcal{S}z))<\vartheta(\mathfrak{b}(a_{n-1},z))$, which is a contradiction. Thus,  $\mathfrak{b}(a_n,\mathcal{S}z) \leq  \mathfrak{b}(a_{n-1},z)$, which implies that $\lim \limits_{n \to \infty}\mathfrak{b}(a_n,\mathcal{S}z)=0.$
Now, $\mathfrak{b}(z,\mathcal{S}z)\leq s(\mathfrak{b}(z,a_n)+\mathfrak{b}(a_n,\mathcal{S}z))$ which on applying limit, gives $z=\mathcal{S}z$. Finally, we will prove  the uniqueness of the fixed point. Suppose $w$ be the another fixed point such that $z\neq w$. Then $\mathfrak{b}(z,w)>0$. On substituting $x=z$ and $y=w$ in inequality $(2.1)$, we have
$$c\leq\mathfrak{J}(\vartheta(\mathfrak{b}(\mathcal{S}z,\mathcal{S}w)),\vartheta(\mathfrak{b}(z,w)))<\mathcal{F}_{c}(\vartheta(\mathfrak{b}(z,w)),\vartheta(\mathfrak{b}(z,w)))\leq c,$$which is a contradiction. Hence $z$ becomes the unique fixed point.
\end{proof}
	\begin{example}
		Let $\mathcal{X}=\{1,2,3,4\}$. Define $\mathfrak{b}:\mathcal{X}\times \mathcal{X}\rightarrow\mathbb{R^{+}}$ given by\\$\mathfrak{b}(x,x)=0$ for all $x \in \mathcal{X}$,
		\\$\mathfrak{b}(1,2)=\mathfrak{b}(2,1)=3$,\\
        $\mathfrak{b}(2,3)=\mathfrak{b}(3,2)=\mathfrak{b}(1,3)=\mathfrak{b}(3,1)=1$,\\
        $\mathfrak{b}(1,4)=\mathfrak{b}(4,1)=\mathfrak{b}(2,4)=\mathfrak{b}(4,2)=\mathfrak{b}(3,4)=\mathfrak{b}(4,3)=4.$\\
  Clearly, $(\mathcal{X},\mathfrak{b},s)$ is a complete $b$-metric space with coefficient $s=\sqrt{3}.$ \\
  Also, define $\mathfrak{J}:[1,\infty)\times[1,\infty)\rightarrow\mathbb{R}$,   $\mathcal{F}_{c}:[1,\infty) \times [1,\infty) \rightarrow \mathbb{R}$ and $\vartheta: (0, \infty) \to (1, \infty)$ by
		\begin{equation*}
			\mathfrak{J}(x,y)=\frac{y}{\sqrt{3}x} \text{ , }  \mathcal{F}_{c}(y,x) = \frac{y}{x} \text{ and } \vartheta(x)=x+1.\end{equation*} 
Note that  $\mathfrak{J}(x,y)< \mathcal{F}_{c}(y,x)$, for all $x,y \in (1,\infty)$ and  $c =1$. Further, If $(a_{n})$ and $(b_{n})$ are sequences in $(0,\infty)$  such that  $$0<\liminf\limits_{n \to \infty} a_{n}\leq \sqrt{3}(\limsup \limits_{n \to \infty} b_{n})\leq 3(\liminf\limits_{n \to \infty} a_{n})<\infty$$
        and
     $$0<\liminf\limits_{n \to \infty} b_{n}\leq \sqrt{3}(\limsup \limits_{n \to \infty} a_{n})\leq3(\liminf\limits_{n \to \infty} b_{n})<\infty,$$ then $\limsup \limits_{n \to \infty} \mathfrak{J}(\vartheta(a_{n}),\vartheta(b_{n}))=\limsup\limits_{n \to \infty}(\dfrac{b_{n}+1}{\sqrt{3}(a_{n}+1)})=\dfrac{\limsup\limits_{n \to \infty}(b_{n}+1)}{\liminf \limits_{n \to \infty}\sqrt{3}(a_{n}+1)}$.
     \\ As $\limsup\limits_{n \to \infty}b_{n} \leq \liminf \limits_{n \to \infty}\sqrt{3}a_{n}.$ Hence, $\limsup \limits_{n \to \infty} \mathfrak{J}(\vartheta(a_{n}),\vartheta(b_{n}))< 1$. Thus $\mathfrak{J}$ is a $\mathbb{A}_{\mathbb{R}}$-simulation function. Moreover, define $\mathcal{S}:\mathcal{X}\rightarrow \mathcal{X}$   by \begin{equation*}
			\mathcal{S}x=\begin{cases} 3, \text{ when }  x \neq 4,\\ 1, \text{ else }.\end{cases}
		\end{equation*} We will now verify that $\mathcal{S}$ is a $\mathfrak{J}_{_{\mathbb{A}_{\mathbb{R}}}}$-contraction. Note that
        \begin{equation*}
            \mathfrak{b}(\mathcal{S}x, \mathcal{S}y)=\begin{cases}
                \mathfrak{b}(1,3)=1,& \text{if } x=4, y\neq 4,\\ \mathfrak{b}(1,1)=0,& \text{if } x=4, y=4,\\ \mathfrak{b}(3,3)=0, & \text{if }x\neq 4, y \neq 4,\\\mathfrak{b}(3,1)=1,& \text{if } x\neq 4, y = 4,
            \end{cases}
        \end{equation*}hence $\mathfrak{b}(\mathcal{S}x, \mathcal{S}y)>0$ if and only if $x=4, y\neq4,$ and  $x\neq 4, y=4$. Now, if $x=4,y\neq4, $ and $x\neq 4, y = 4,$ then $\mathfrak{b}(x,y)=4$ and $\mathfrak{b}(\mathcal{S}x, \mathcal{S}y)=1.$ Further,  for all $x, y \in \mathcal{X}$ with $\mathfrak{b}(\mathcal{S}x,\mathcal{S}y)>0,$ we have \begin{equation*}\mathfrak{J}(\vartheta
(\mathfrak{b}(\mathcal{S}x , \mathcal{S}y )),\vartheta(\mathfrak{b}(x,y)))=\dfrac{\vartheta(\mathfrak{b}(x,y))}{\sqrt{3} \vartheta
(\mathfrak{b}(\mathcal{S}x , \mathcal{S}y ))  }=\dfrac{\mathfrak{b}(x,y)+1}{\sqrt{3}(\mathfrak{b}(\mathcal{S}x , \mathcal{S}y )+1)} =\dfrac{4+1}{\sqrt{3}(1+1)}=\dfrac{5}{2\sqrt{3}}>1\end{equation*}
		Hence, $\mathcal{S}$ is a $\mathfrak{J}_{_{\mathbb{A}_{\mathbb{R}}}}$-contraction. By Theorem $2.5$, $\mathcal{S}$ has a unique fixed point $3$.
	\end{example}

\begin{theorem}
	Let $(\mathcal{X},\mathfrak{b},s)$ be a complete b-metric space with coefficient $s\geq1$ and $\mathcal{S}:\mathcal{X}\rightarrow \mathcal{X}$ be a $\mathfrak{b}$-continuous self-mapping. Suppose $\mathfrak{J}\in \bf{\mathscr{J}}$, $\vartheta\in \Theta^{*}$  and satisfies 
	\begin{equation}
		\mathfrak{J}\left(\vartheta(\mathfrak{b}(\mathcal{S}x,\mathcal{S}y)),\vartheta(\max\{\mathfrak{b}(x,y),\mathfrak{b}(x,\mathcal{S}x),\mathfrak{b}(y,\mathcal{S}y),\frac{\mathfrak{b}(\mathcal{S}x,y)+\mathfrak{b}(x,\mathcal{S}y)}{2s}\})\right)\geq c,
	\end{equation}
	for all $\mathcal{S}x\neq \mathcal{S}y,~x,y\in \mathcal{X}$. Then $\mathcal{S}$ has a unique fixed point.
\end{theorem}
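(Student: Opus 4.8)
The plan is to run the argument of Theorem 2.5, tracking how the four-term maximum in $(2.7)$ collapses at each stage. Fix $a_{0}\in\mathcal{X}$, set $a_{n}=\mathcal{S}^{n}a_{0}$, and assume $\mathfrak{b}(a_{n},a_{n+1})>0$ for every $n$ (otherwise a fixed point appears at once). Substituting $x=a_{n}$, $y=a_{n+1}$ into $(2.7)$ makes the maximum equal to $\max\{\mathfrak{b}(a_{n},a_{n+1}),\mathfrak{b}(a_{n+1},a_{n+2}),\frac{\mathfrak{b}(a_{n},a_{n+2})}{2s}\}$, and since $\frac{\mathfrak{b}(a_{n},a_{n+2})}{2s}\le\frac{1}{2}\bigl(\mathfrak{b}(a_{n},a_{n+1})+\mathfrak{b}(a_{n+1},a_{n+2})\bigr)\le\max\{\mathfrak{b}(a_{n},a_{n+1}),\mathfrak{b}(a_{n+1},a_{n+2})\}$ by the $b$-triangle inequality, it reduces to $M_{n}:=\max\{\mathfrak{b}(a_{n},a_{n+1}),\mathfrak{b}(a_{n+1},a_{n+2})\}$. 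Then property (i) of $\mathfrak{J}$ together with property (iv) of $\mathcal{F}_{c}$ forces $\vartheta(M_{n})>\vartheta(\mathfrak{b}(a_{n+1},a_{n+2}))$, hence $M_{n}=\mathfrak{b}(a_{n},a_{n+1})>\mathfrak{b}(a_{n+1},a_{n+2})$ (as $\vartheta$ is increasing), exactly as in Theorem 2.5. So $r_{n}:=\mathfrak{b}(a_{n},a_{n+1})$ decreases to some $a\ge0$; if $a>0$, then the sequences $r_{n+1}$ and $M_{n}=r_{n}$ satisfy the two chains $0<a\le sa\le s^{2}a<\infty$ required by property (ii) of $\mathfrak{J}$, which yields $\limsup_{n}\mathfrak{J}(\vartheta(r_{n+1}),\vartheta(r_{n}))<c$, contradicting $(2.7)$. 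Hence $r_{n}\to0$.

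Next I would show that $(a_{n})$ is Cauchy by contradiction: if it is not, pick $\varepsilon>0$ and indices $n_{i}>m_{i}>i$ with $n_{i}$ minimal such that $\mathfrak{b}(a_{m_{i}},a_{n_{i}})\ge\varepsilon$ and $\mathfrak{b}(a_{m_{i}},a_{n_{i}-1})<\varepsilon$; the $b$-triangle inequality gives $\varepsilon\le\liminf_{i}\mathfrak{b}(a_{m_{i}},a_{n_{i}})\le\limsup_{i}\mathfrak{b}(a_{m_{i}},a_{n_{i}})\le s\varepsilon$. Substituting $x=a_{m_{i}-1}$, $y=a_{n_{i}-1}$ into $(2.7)$, the maximum $M_{i}$ is that of $\mathfrak{b}(a_{m_{i}-1},a_{n_{i}-1})$, $r_{m_{i}-1}$, $r_{n_{i}-1}$ and $\frac{\mathfrak{b}(a_{m_{i}},a_{n_{i}-1})+\mathfrak{b}(a_{m_{i}-1},a_{n_{i}})}{2s}$. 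Using $r_{m_{i}-1},r_{n_{i}-1}\to0$, the bound $\mathfrak{b}(a_{m_{i}},a_{n_{i}-1})<\varepsilon$ and repeated $b$-triangle estimates, one obtains $\limsup_{i}\mathfrak{b}(a_{m_{i}-1},a_{n_{i}-1})\le s\varepsilon$ and $\limsup_{i}\mathfrak{b}(a_{m_{i}-1},a_{n_{i}})\le s^{2}\varepsilon$, so $\limsup_{i}\frac{\mathfrak{b}(a_{m_{i}},a_{n_{i}-1})+\mathfrak{b}(a_{m_{i}-1},a_{n_{i}})}{2s}\le\frac{(1+s^{2})\varepsilon}{2s}\le s\varepsilon$, whence $\limsup_{i}M_{i}\le s\varepsilon$. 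For the matching lower bound, $(2.7)$ together with property (i) of $\mathfrak{J}$ and property (iv) of $\mathcal{F}_{c}$ gives $\vartheta(M_{i})>\vartheta(\mathfrak{b}(a_{m_{i}},a_{n_{i}}))$, hence $M_{i}>\mathfrak{b}(a_{m_{i}},a_{n_{i}})\ge\varepsilon$ for every $i$. Thus $\mathfrak{b}(a_{m_{i}},a_{n_{i}})$ and $M_{i}$ both have their $\liminf$ and $\limsup$ in $[\varepsilon,s\varepsilon]$, so the four inequalities in property (ii) of $\mathfrak{J}$ follow at once from $s\ge1$, yielding $\limsup_{i}\mathfrak{J}(\vartheta(\mathfrak{b}(a_{m_{i}},a_{n_{i}})),\vartheta(M_{i}))<c$; but $(2.7)$ forces $c\le\mathfrak{J}(\vartheta(\mathfrak{b}(a_{m_{i}},a_{n_{i}})),\vartheta(M_{i}))$ for each $i$, a contradiction. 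Hence $(a_{n})$ is Cauchy.

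By completeness $a_{n}\to z$ for some $z\in\mathcal{X}$. Since $\mathcal{S}$ is $\mathfrak{b}$-continuous, $a_{n+1}=\mathcal{S}a_{n}\to\mathcal{S}z$, and combining this with $a_{n+1}\to z$ through $\mathfrak{b}(z,\mathcal{S}z)\le s[\mathfrak{b}(z,a_{n+1})+\mathfrak{b}(a_{n+1},\mathcal{S}z)]\to0$ gives $\mathcal{S}z=z$. For uniqueness, if $w=\mathcal{S}w\ne z$, then $\mathcal{S}z\ne\mathcal{S}w$ and, substituting $x=z$, $y=w$, the maximum equals $\max\{\mathfrak{b}(z,w),0,0,\frac{\mathfrak{b}(z,w)}{s}\}=\mathfrak{b}(z,w)$ (using $s\ge1$), so $c\le\mathfrak{J}(\vartheta(\mathfrak{b}(z,w)),\vartheta(\mathfrak{b}(z,w)))<\mathcal{F}_{c}(\vartheta(\mathfrak{b}(z,w)),\vartheta(\mathfrak{b}(z,w)))\le c$ by property (i) of $\mathfrak{J}$ and property (iv) of $\mathcal{F}_{c}$, a contradiction.

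The step I expect to be the main obstacle is proving $(a_{n})$ is Cauchy: unlike in Theorem 2.5, the quantity fed into $\mathfrak{J}$ is now the four-term maximum $M_{i}$, so one must verify carefully that $\limsup_{i}M_{i}\le s\varepsilon$ — which is precisely where the normalization by $2s$ of the rational term $\frac{\mathfrak{b}(\mathcal{S}x,y)+\mathfrak{b}(x,\mathcal{S}y)}{2s}$ is used, via the elementary inequality $1+s^{2}\le2s^{2}$ — while the matching lower bound $M_{i}>\mathfrak{b}(a_{m_{i}},a_{n_{i}})\ge\varepsilon$ is read off for free from the contractive inequality through the properties of $\mathcal{F}_{c}$. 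Everything else is a routine adaptation of the proof of Theorem 2.5, with $\mathfrak{b}$-continuity of $\mathcal{S}$ replacing the argument there that identified the fixed point.
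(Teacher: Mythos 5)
Your proposal is correct and follows the same overall skeleton as the paper's proof: Picard iteration, monotone decrease of $r_n=\mathfrak{b}(a_n,a_{n+1})$ via property (iv) of $\mathcal{F}_c$ and the reduction $\frac{\mathfrak{b}(a_n,a_{n+2})}{2s}\le\max\{r_n,r_{n+1}\}$, a contradiction argument with property (ii) of $\mathfrak{J}$ to get $r_n\to 0$, the standard $\varepsilon$-subsequence construction for the Cauchy step, $\mathfrak{b}$-continuity to identify the fixed point, and the same uniqueness argument. The one place you genuinely diverge is the Cauchy step: the paper splits into three cases according to which of the four terms realizes the maximum, and in the case where the maximum is $\mathfrak{b}(a_{m_i-1},a_{n_i-1})$ it simply defers to the argument of Theorem 2.5; you instead bound the whole maximum $M_i$ at once, showing $\varepsilon\le\liminf_i M_i\le\limsup_i M_i\le s\varepsilon$ (the upper bound on the averaged term resting on $\tfrac{1+s^2}{2s}\le s$, and the lower bound $M_i>\mathfrak{b}(a_{m_i},a_{n_i})\ge\varepsilon$ coming for free from property (iv) of $\mathcal{F}_c$), and then apply property (ii) of $\mathfrak{J}$ a single time to the pair $\bigl(\mathfrak{b}(a_{m_i},a_{n_i}),M_i\bigr)$. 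This unified treatment is cleaner and arguably more rigorous than the paper's casewise analysis, since it avoids the loose appeal to Theorem 2.5 in the paper's Case (ii) and handles the possibility that different terms dominate for different $i$ without passing to further subsequences.
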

\begin{proof}
	Proceeding in the similar manner as the proof of Theorem $2.5$, substitute $x=a_n, y=a_{n+1}$ in inequality $(2.7)$, we get
	\begin{align*}
&\mathfrak{J}(\vartheta(\mathfrak{b}(\mathcal{S}a_{n},\mathcal{S}a_{n+1})),\vartheta(\max\{\mathfrak{b}(a_{n},a_{n+1}),\mathfrak{b}(a_{n},\mathcal{S}a_{n}),\mathfrak{b}(a_{n+1},\mathcal{S}a_{n+1}), \frac{\mathfrak{b}(\mathcal{S}a_{n},a_{n+1})+\mathfrak{b}(a_{n},\mathcal{S}a_{n+1})}{2s}\}))=\\&\hspace{1cm}\mathfrak{J}(\vartheta(\mathfrak{b}(a_{n+1},a_{n+2})),\vartheta(\max\{\mathfrak{b}(a_{n},a_{n+1}),\mathfrak{b}(a_{n},a_{n+1}),\mathfrak{b}(a_{n+1},a_{n+2}), \frac{\mathfrak{b}(a_{n+1},a_{n+1})+\mathfrak{b}(a_{n},a_{n+2})}{2s}\}))\geq c.	
	\end{align*}
 Let $r_{n}=\mathfrak{b}(a_{n},a_{n+1})$, Then $\mathfrak{J}(\vartheta(r_{n+1}),\vartheta(\max\{r_{n}, r_{n},r_{n+1}, \dfrac{0+\mathfrak{b}(a_{n},a_{n+2})}{2s}\}))\geq c.$	
	Using the property of $\mathfrak{J}$, it  follows  that
	$\mathfrak{J}(\vartheta(r_{n+1}),\vartheta(\max\{r_{n},r_{n+1}, \dfrac{\mathfrak{b}(a_{n},a_{n+2})}{2s}\}))  <\mathcal{F}_{c}(\vartheta(\max\{r_n, r_{n+1},\dfrac{\mathfrak{b}(a_n,a_{n+2})}{2s}\}),\vartheta(r_{n+1})),$
using the property of $\mathcal{F}_{c}$, it follows that 
 $\vartheta(r_{n+1})<\vartheta(\max\{r_n,r_{n+1},\dfrac{\mathfrak{b}(a_n,a_{n+2})}{2s}\})$. Considering that $\max\{r_n,r_{n+1}, \dfrac{\mathfrak{b}(a_n,a_{n+2})}{2s}\}< r_{n+1}$. As $\vartheta$ is increasing, $\vartheta(\max\{r_n,r_{n+1},\dfrac{\mathfrak{b}(a_n,a_{n+2})}{2s}\})\leq \vartheta(r_{n+1})$, which leads to a contradiction. Hence, $r_{n+1}<\max\{r_n,r_{n+1},\dfrac{\mathfrak{b}(a_n,a_{n+2})}{2s}\}$.
Further, $r_{n+1}<\max\{r_n,r_{n+1},\dfrac{\mathfrak{b}(a_n,a_{n+2})}{2s}\} \leq \max\{ r_n, r_{n+1}, \dfrac{r_{n}+r_{n+1}}{2}\}=r_n$. Thus, $(r_{n})$ is  a decreasing sequence of positive reals; hence $ \lim\limits_{n \to \infty}r_{n}=a\geq0$. Following  the steps in  Theorem $2.5$, we get $\lim\limits_{n \to \infty}r_{n}=0.$  Now, we aim to prove that $(a_n)$ is a Cauchy sequence. Assume on contrary, that there exists an $\varepsilon>0$ such that subsequences  $(a_{n_i})$ and $(a_{m_i})$ of sequence $(a_{n})$ such that $n_i$ is the smallest integer for which
	$$n_i>m_i>i,~\mathfrak{b}(a_{m_i},a_{n_i})\geq\varepsilon~\text{and}~\mathfrak{b}(a_{m_i},a_{n_i-1})<\varepsilon.$$ Now,  $\varepsilon\leq \mathfrak{b}(a_{m_i},a_{n_i})\leq s(\mathfrak{b}(a_{m_i},a_{n_{i}-1})+\mathfrak{b}(a_{n_{i}-1},a_{n_i}))$, which implies that \begin{equation}\varepsilon\leq\liminf \limits_{i \to \infty}\mathfrak{b}(a_{m_i},a_{n_i})\leq s\varepsilon \text{ and }\varepsilon\leq\limsup \limits_{i \to \infty}\mathfrak{b}(a_{m_i},a_{n_i})\leq s\varepsilon.\end{equation}
	Note $a_{m_{i}}\neq a_{n_{i}} \text{ 
 as }\mathfrak{b}(a_{m_{i}},a_{n_{i}})\geq\varepsilon$. Substitute $x=a_{m_{i}}$ and $y=a_{n_{i}}$ in  inequality $(2.7)$, we get
 \begin{align}c&\leq\mathfrak{J}(\vartheta(\mathfrak{b}(a_{m_{i}},a_{n_{i}})),\vartheta(\max\{\mathfrak{b}(a_{m_{i}-1},a_{n_{i}-1}),r_{m_{i}},r_{n_{i}},\frac{\mathfrak{b}(a_{m_{i}},a_{n_{i}-1})+\mathfrak{b}(a_{m_{i}-1},a_{n_{i}})}{2s}\}))\\&<\mathcal{F}_{c}(\vartheta(\max\{\mathfrak{b}(a_{m_{i}-1},a_{n_{i}-1}),r_{m_{i}},r_{n_{i}},\frac{\mathfrak{b}(a_{m_{i}},a_{n_{i}-1})+\mathfrak{b}(a_{m_{i}-1},a_{n_{i}})}{2s}\}),\vartheta(\mathfrak{b}(a_{m_{i}},a_{n_{i}}))).\notag\end{align}
	By the property  $(iv)$ of $\mathcal{F}_{c}$, we have
	$$\vartheta(\mathfrak{b}(a_{m_{i}},a_{n_{i}}))<\vartheta(\max\{ \mathfrak{b}(a_{m_{i}-1},a_{n_{i}-1}),r_{m_{i}},r_{n_{i}},\frac{\mathfrak{b}(a_{m_{i}},a_{n_{i}-1})+\mathfrak{b}(a_{m_{i}-1},a_{n_{i}})}{2s}\}).$$ Suppose  $\max\{ \mathfrak{b}(a_{m_{i}-1},a_{n_{i}-1}),r_{m_{i}},r_{n_{i}},\dfrac{\mathfrak{b}(a_{m_{i}},a_{n_{i}-1})+\mathfrak{b}(a_{m_{i}-1},a_{n_{i}})}{2s}\}< \mathfrak{b}(a_{m_{i}},a_{n_{i}}).$ 
    As $\vartheta$ is increasing, \\$\vartheta(\max\{ \mathfrak{b}(a_{m_{i}-1},a_{n_{i}-1}),r_{m_{i}},r_{n_{i}},\dfrac{\mathfrak{b}(a_{m_{i}},a_{n_{i}-1})+\mathfrak{b}(a_{m_{i}-1},a_{n_{i}})}{2s}\}) \leq \vartheta(\mathfrak{b}(a_{m_{i}},a_{n_{i}}))$, which is a contradiction. Hence, $ \mathfrak{b}(a_{m_{i}},a_{n_{i}}) \leq \max\{ \mathfrak{b}(a_{m_{i}-1},a_{n_{i}-1}), r_{m_{i}},r_{n_{i}},\dfrac{\mathfrak{b}(a_{m_{i}},a_{n_{i}-1})+\mathfrak{b}(a_{m_{i}-1},a_{n_{i}})}{2s}\}.$ Now, consider the following three cases:\\
     Case $(i)$: If  $ \mathfrak{b}(a_{m_{i}},a_{n_{i}}) <r_{m_{i}}$ or $~r_{n_{i}}$ holds for infinitely many $i$ then  $\lim \limits_{i \to \infty}\mathfrak{b}(a_{m_{i}},a_{n_{i}})=0$, which is contrary to inequality $(2.8)$.\\
     Case $(ii)$: If   $\mathfrak{b}(a_{m_{i}},a_{n_{i}})< \mathfrak{b}(a_{m_{i}-1},a_{n_{i}-1})~$  holds for infinitely many $i$  then it follows from  Theorem $2.5$,  $(a_{n})$ is a Cauchy sequence.\\
     Case $(iii)$: Let $p_{i}=\dfrac{1}{2s}(\mathfrak{b}(a_{m_{i}},a_{n_{i}-1})+\mathfrak{b}(a_{m_{i}-1},a_{n_{i}}))$.  If $\mathfrak{b}(a_{m_{i}},a_{n_{i}})<p_{i}$ holds for infinitely many $i$ then $p_{i}\leq \dfrac{1}{2}(\mathfrak{b}(a_{m_{i}},a_{n_{i}})+2r_{n_{i}-1}+\mathfrak{b}(a_{m_{i}-1},a_{n_{i}-1}))$.
 Hence $\mathfrak{b}(a_{m_{i}},a_{n_{i}}) < p_{i}\leq\dfrac{1}{2}(\mathfrak{b}(a_{m_{i}},a_{n_{i}})+2r_{n_{i}-1}+\mathfrak{b}(a_{m_{i}-1},a_{n_{i}-1}))$. Also,  $\mathfrak{b}(a_{n_{i}-1},a_{m_{i}-1})\leq s(\mathfrak{b}(a_{m_{i}-1},a_{m_i})+\mathfrak{b}(a_{m_i},a_{n_{i}-1}))$ hence, $\limsup \limits_{i \to \infty}\mathfrak{b}(a_{m_{i}-1},a_{n_{i}-1})\leq s\varepsilon.$  By  inequality $(2.8)$ and $\lim \limits_{n \to \infty}r_{n}=0$, we get
 $$0 < \liminf \limits_{i \to \infty}\mathfrak{b}(a_{m_{i}},a_{n_{i}})\leq s(\limsup \limits_{i \to \infty}p_{i}) \leq s^{2}\varepsilon\leq s^{2}(\liminf \limits_{i \to \infty}\mathfrak{b}(a_{m_i},a_{n_i}))< \infty$$ and $$0<\liminf \limits_{i \to \infty}p_{i}\leq s\varepsilon \leq s(\limsup \limits_{i \to \infty}\mathfrak{b}(a_{m_i},a_{n_i}))\leq s^{2} \varepsilon\leq s^{2}(\liminf \limits_{i \to \infty}p_{i}) < \infty. $$
  Hence by the property of $\mathfrak{J}$, we get
$\limsup\limits_{i \to \infty}\mathfrak{J}(\vartheta(\mathfrak{b}(a_{m_{i}},a_{n_{i}})), \vartheta(p_{i})) <c,$ which is a contradiction to the inequality $(2.9)$. Hence $(a_{n})$ is a Cauchy sequence.\\ Since $(\mathcal{X},\mathfrak{b},s)$ is complete $b$-metric space, 
$$\lim\limits_{n,m \to \infty}\mathfrak{b}(a_n,a_m)=\lim_{n \to \infty}\mathfrak{b}(a_n,z)=0,~\text{for some}~z\in \mathcal{X}.$$ We will  prove that  $z$ becomes the unique fixed point of $\mathcal{S}$. 
    Also, $\mathfrak{b}(\mathcal{S}z,z) \leq s(\mathfrak{b}(\mathcal{S}z,\mathcal{S}a_{n-1})+\mathfrak{b}(\mathcal{S}a_{n-1},z)).$
 Using continuity of $\mathcal{S}$ and $\lim \limits_{n \to \infty}a_{n}=z$, we get $\mathfrak{b}(\mathcal{S}z,z)=0. $ Hence, $z$ becomes the fixed point of $\mathcal{S}.$
 Let $w$ be another fixed point of $\mathcal{S}$ such that $z \neq w$. On substitution of $x=z$ and $y=w$ in inequality $(2.7)$, we have 
\begin{align*} c  &\leq \mathfrak{J}\left(\vartheta(\mathfrak{b}(\mathcal{S}z,\mathcal{S}z)),\vartheta(\max\{\mathfrak{b}(z,z),\mathfrak{b}(z,\mathcal{S}z),\mathfrak{b}(x,\mathcal{S}z),\frac{\mathfrak{b}(\mathcal{S}z,z)+\mathfrak{b}(z,\mathcal{S}z)}{2s}\})\right)\\&<\mathcal{F}_{c}\left(\vartheta(\max\{\mathfrak{b}(z,z),\mathfrak{b}(z,z),\mathfrak{b}(z,z),\frac{\mathfrak{b}(z,z)+\mathfrak{b}(z,z)}{2s}\}),\vartheta(\mathfrak{b}(z,z))\right)\\&<\mathcal{F}_{c}\left(\vartheta(\max\{\mathfrak{b}(z,z),0,0,\frac{\mathfrak{b}(z,z)}{s}\},\vartheta(\mathfrak{b}(z,z))\right)\\&<\mathcal{F}_{c}\left(\vartheta(\mathfrak{b}(z,z)),\vartheta(\mathfrak{b}(z,z))\right)\\&\leq c,\end{align*}
which is a contradiction. Hence $z$ becomes the unique fixed point.
	\end{proof}	 
    \begin{example}
		Let $\mathcal{X}=\{1,2,3,4\}$. Define $\mathfrak{b}:\mathcal{X}\times \mathcal{X}\rightarrow\mathbb{R^{+}}$ given by\\$\mathfrak{b}(x,x)=0$ for all $x \in \mathcal{X}$,
		\\$\mathfrak{b}(1,2)=\mathfrak{b}(2,1)=3$,\\
        $\mathfrak{b}(2,3)=\mathfrak{b}(3,2)=\mathfrak{b}(1,3)=\mathfrak{b}(3,1)=1$,\\
        $\mathfrak{b}(1,4)=\mathfrak{b}(4,1)=15,$\\
        $\mathfrak{b}(2,4)=\mathfrak{b}(4,2)=\mathfrak{b}(3,4)=\mathfrak{b}(4,3)=4.$\\
  Clearly, $(\mathcal{X},\mathfrak{b},s)$ is a complete $b$-metric space with coefficient $s=3.$ \\
  Also, define $\mathfrak{J}:[1,\infty)\times[1,\infty)\rightarrow\mathbb{R}$,   $\mathcal{F}_{c}:[1,\infty) \times [1,\infty) \rightarrow \mathbb{R}$ and $\vartheta: (0, \infty) \to (1, \infty)$ by
		\begin{equation*}
			\mathfrak{J}(x,y)=\frac{y}{3x} \text{ , }  \mathcal{F}_{c}(y,x) = \frac{y}{x} \text{ and } \vartheta(x)=x+1.\end{equation*} 
Note that  $\mathfrak{J}(x,y)< \mathcal{F}_{c}(y,x)$, for all $x,y \in (1,\infty)$ and  $c =1$. Further, If $(a_{n})$ and $(b_{n})$ are sequences in $(0,\infty)$  such that  $$0<\liminf\limits_{n \to \infty} a_{n}\leq 3(\limsup \limits_{n \to \infty} b_{n})\leq 3^{2}(\liminf\limits_{n \to \infty} a_{n})<\infty$$
        and
     $$0<\liminf\limits_{n \to \infty} b_{n}\leq 3(\limsup \limits_{n \to \infty} a_{n})\leq 3^{2}(\liminf\limits_{n \to \infty} b_{n})<\infty,$$ then $\limsup \limits_{n \to \infty} \mathfrak{J}(\vartheta(a_{n}),\vartheta(b_{n}))=\limsup\limits_{n \to \infty}(\dfrac{b_{n}+1}{3(a_{n}+1)})=\dfrac{\limsup\limits_{n \to \infty}(b_{n}+1)}{\liminf \limits_{n \to \infty}3(a_{n}+1)}$.
     \\ As $\limsup\limits_{n \to \infty}b_{n} \leq \liminf \limits_{n \to \infty}3a_{n}.$ Hence, $\limsup \limits_{n \to \infty} \mathfrak{J}(\vartheta(a_{n}),\vartheta(b_{n}))< 1$. Thus $\mathfrak{J}$ is a $\mathbb{A}_{\mathbb{R}}$-simulation function. Moreover, define $\mathcal{S}:\mathcal{X}\rightarrow \mathcal{X}$   by \begin{equation*}
			\mathcal{S}x=\begin{cases} 3, \text{ when }  x \neq 4\\ 1, \text{ else }\end{cases}.
		\end{equation*} We will now verify that $\mathcal{S}$ is a $\mathfrak{J}_{_{\mathbb{A}_{\mathbb{R}}}}$-contraction. Note that
       \begin{equation*}
            \mathfrak{b}(\mathcal{S}x, \mathcal{S}y)=\begin{cases}
                \mathfrak{b}(1,3)=1,& \text{if } x=4, y\neq 4,\\ \mathfrak{b}(1,1)=0,& \text{if } x=4, y=4,\\ \mathfrak{b}(3,3)=0, & \text{if }x\neq 4, y \neq 4,\\\mathfrak{b}(3,1)=1,& \text{if } x\neq 4, y = 4,
            \end{cases}
        \end{equation*}hence $\mathfrak{b}(\mathcal{S}x, \mathcal{S}y)>0$ if and only if $x=4, y\neq4,$ and  $x\neq 4, y=4$. Now, consider the following cases:\\ Case 1: if  $x=4$ and $y\neq4,$ then   \begin{equation*} 1<\dfrac{15+1}{3(1+1)}=\dfrac{\mathfrak{b}(1,4)+1}{3(\mathfrak{b}(\mathcal{S}x , \mathcal{S}y )+1)}=\dfrac{\mathfrak{b}(\mathcal{S}x,x)+1}{3(\mathfrak{b}(\mathcal{S}x , \mathcal{S}y )+1)}\leq \mathfrak{J}(\vartheta
(\mathfrak{b}(\mathcal{S}x , \mathcal{S}y )),\vartheta(M_{s}(x,y))=\dfrac{M_{s}(x,y)+1}{3(\mathfrak{b}(\mathcal{S}x , \mathcal{S}y )+1)}\end{equation*}
where $M_{s}(x,y)= \max\{\mathfrak{b}(x,y),\mathfrak{b}(x,\mathcal{S}x),\mathfrak{b}(y,\mathcal{S}y),\dfrac{\mathfrak{b}(\mathcal{S}x,y)+\mathfrak{b}(x,\mathcal{S}y)}{2s}\}$.		Hence, $\mathcal{S}$ is a $\mathfrak{J}_{_{\mathbb{A}_{\mathbb{R}}}}$-contraction.\\
Case $2:$ if $x =1$, and $y=4$, then \begin{equation*} 1<\dfrac{15+1}{3(1+1)}=\dfrac{\mathfrak{b}(1,4)+1}{3(\mathfrak{b}(\mathcal{S}x , \mathcal{S}y )+1)}=\dfrac{\mathfrak{b}(x,y)+1}{3(\mathfrak{b}(\mathcal{S}x , \mathcal{S}y )+1)}\leq \mathfrak{J}(\vartheta
(\mathfrak{b}(\mathcal{S}x , \mathcal{S}y )),\vartheta(M_{s}(x,y))=\dfrac{M_{s}(x,y)+1}{3(\mathfrak{b}(\mathcal{S}x , \mathcal{S}y )+1)}\end{equation*} where $M_{s}(x,y)= \max\{\mathfrak{b}(x,y),\mathfrak{b}(x,\mathcal{S}x),\mathfrak{b}(y,\mathcal{S}y),\dfrac{\mathfrak{b}(\mathcal{S}x,y)+\mathfrak{b}(x,\mathcal{S}y)}{2s}\}$.		Hence, $\mathcal{S}$ is a $\mathfrak{J}_{_{\mathbb{A}_{\mathbb{R}}}}$-contraction.\\
Case $3:$ if $x \in \{2,3\}$, and $y=4$, then \begin{equation*} 1<\dfrac{15+1}{3(1+1)}=\dfrac{\mathfrak{b}(1,4)+1}{3(\mathfrak{b}(\mathcal{S}x , \mathcal{S}y )+1)}=\dfrac{\mathfrak{b}(\mathcal{S}y,y)+1}{3(\mathfrak{b}(\mathcal{S}x , \mathcal{S}y )+1)}\leq \mathfrak{J}(\vartheta
(\mathfrak{b}(\mathcal{S}x , \mathcal{S}y )),\vartheta(M_{s}(x,y))=\dfrac{M_{s}(x,y)+1}{3(\mathfrak{b}(\mathcal{S}x , \mathcal{S}y )+1)}\end{equation*} where $M_{s}(x,y)= \max\{\mathfrak{b}(x,y),\mathfrak{b}(x,\mathcal{S}x),\mathfrak{b}(y,\mathcal{S}y),\dfrac{\mathfrak{b}(\mathcal{S}x,y)+\mathfrak{b}(x,\mathcal{S}y)}{2s}\}$.		Hence, $\mathcal{S}$ is a $\mathfrak{J}_{_{\mathbb{A}_{\mathbb{R}}}}$-contraction.\\
Therefore, in all cases, $\mathcal{S}$ is a $\mathfrak{J}_{_{\mathbb{A}_{\mathbb{R}}}}$-contraction.
So, by Theorem $2.7$, $\mathcal{S}$ has a unique fixed point $3$.
	\end{example}
    \begin{remark}
       Only  increasing property of $\vartheta $  and property $(iv)$ of $\mathcal{F}_{c}$ is used throughout.
    \end{remark}

%\section{The second section}

%%% ENTER REFERENCES IN THE FORM

\end{document}